\newtheorem{corollary}{Corollary}
\newtheorem{lemma}{Lemma}
\newtheorem{proposition}{Proposition}
\newtheorem{definition}{Definition}
\renewcommand{\P}{\mathcal{P}}
\begin{document}

\title{Gorenstein flat and projective (pre)covers}
\author{S. Estrada, A. Iacob, S. Odabasi}
\thanks{2010 {\it Mathematics Subject Classification}. 18G10, 18G25, 18G35.}
\thanks{key words: Gorenstein flat module, Gorenstein flat complex, Gorenstein flat cover, Gorenstein projective module, Gorenstein projective complex, Gorenstein projective precover}

\maketitle

\begin{abstract}
We consider a right coherent ring $R$. We prove that the class of Gorenstein flat complexes is covering in the category of complexes of left $R$-modules $Ch(R)$.\\
When $R$ is also left n-perfect, we prove that the class of Gorenstein projective complexes is special precovering in $Ch(R)$.
\end{abstract}

\section{introduction}

Gorenstein homological algebra is the relative version of homological algebra that uses Gorenstein projective, Gorenstein flat and Gorenstein injective modules instead of the classical projective, flat, and injective modules. But while the classical projective and injective resolutions are known to exist over arbitrary rings, things are different when it comes to the existence of the Gorenstein projective and Gorenstein injective resolutions. Their existence is well known over Gorenstein rings. But for arbitrary rings this is still an open question.\\
We focus on the existence of the Gorenstein projective precovers. So far the existence of Gorenstein projective precovers is known over commutative noetherian rings of finite Krull dimension (\cite{murfet-salarian:11: totally.acyclic}). In fact, over such rings, the class of Gorenstein projective modules is special precovering (\cite{EEI}). We prove (Proposition 6) that if $R$ is a right coherent ring that is also left n-perfect then the class of Gorenstein projective complexes is special precovering in the category $Ch(R)$. In particular, every left $R$-module $M$ has a special Gorenstein projective precover in this case. Examples of such rings include but are not limited to: Gorenstein rings, commutative noetherian rings of finite Krull dimension, as well as two sided noetherian rings $R$ such that $inj.dim_R R < \infty$.\\
We also consider the question of the existence of the Gorenstein flat covers for complexes. It has been proved recently (\cite{yang:14: Gorenstein.flat.precovers}) that the class of Gorenstein flat modules is precovering over any asoociative ring with unity. However when it comes to complexes of modules, the best result is the following: If $R$ is a two sided noetherian ring then every complex of $R$-modules has a Gorenstein flat cover (\cite{EEI}). We show here (Proposition 4) that the class of Gorenstein flat complexes is covering over any right coherent ring $R$.

\section{preliminaries}

Throughout this section $R$ denotes an associative ring with unity.\\

\begin{definition} (\cite{enochs:00:relative}, definition 10.2.1)
An $R$-module $M$ is Gorenstein projective if there exists an exact and $Hom(-,Proj)$ exact sequence of projective $R$-modules $$\ldots \rightarrow P_1 \rightarrow P_0 \rightarrow P_{-1} \rightarrow \ldots $$ such that $M=Ker(P_0 \rightarrow P_{-1})$.
\end{definition}

We will use the notation $\mathcal{GP}$ for the class of Gorenstein projective modules.\\

By replacing "projective module" with "projective complex" in the above definition, we obtain the definition of a Gorenstein projective complex in the category of complexes of left $R$-modules, $Ch(R)$. We recall that a complex $P$ is projective if $P$ is exact and if each $Z_n(P)$ is a projective module.\\
It is known (\cite{liu:11: Gorenstein.complexes}) that over any ring $R$ a complex $C$ is Gorenstein projective if and only if each $C_n$ is a Gorenstein projective $R$-module.

The Gorenstein flat modules are defined in terms of the tensor product.\\
\begin{definition} (\cite{enochs:00:relative}, Definition 10.3.1)
A left $R$-module $G$ is Gorenstein flat if there exists an exact and $Inj \otimes -$ exact sequence of flat left $R$-modules $$\ldots \rightarrow F_1 \rightarrow F_0 \rightarrow F_{-1} \rightarrow \ldots $$ such that $G=Ker(F_0 \rightarrow F_{-1})$.
\end{definition}

We will use the notation $\mathcal{GF}$ for the class of Gorenstein flat modules.\\

The Gorenstein flat complexes were defined in \cite{garcia:99:covers}.
We recall that if $C$ is a complex of right $R$-modules and $D$ is a
complex of left $R$-modules then the usual tensor product complex of
$C$ and $D$ is the complex of $Z$-modules $C \otimes ^. D$ with $(C
\otimes ^. D)_n = \oplus_{t \in Z} (C_t \otimes_R D_{n-t})$ and
differentials $$\delta (x \otimes y) = \delta^C _t (x) \otimes y +
(-1)^t x
\otimes \delta^D _{n-t}(y)$$ for $x \in C_t$ and $y \in D_{n-t}$.\\

In \cite{garcia:99:covers},  Garc\'{\i}a Rozas introduced another
tensor product: if $C$ is again a complex of right $R$-modules and
$D$ is a complex of left $R$-modules then $C \otimes D$ is defined
to be $\frac{C \otimes ^. D}{B(C \otimes ^. D)}$. Then with the maps
$$\frac{(C \otimes ^. D)_n}{B_n(C \otimes ^. D)} \rightarrow \frac{(C
\otimes ^. D)_{n-1}}{B_{n-1}(C \otimes ^. D)}$$ $x \otimes y
\rightarrow \delta_C (x) \otimes y$, where $x \otimes y$ is used to
denote the coset in $\frac{C \otimes ^. D}{B(C \otimes ^. D)}$ we
get a complex. This is the tensor product used to define Gorenstein flat complexes.\\
We recall that a compex $F$ is flat if $F$ is exact and $Z_n(F)$ is a flat module for any integer n.

\begin{definition} (\cite{garcia:99:covers}, Definition 5.4.1) A complex $G$ of left $R$-modules is Gorenstein flat if there exists an exact and $Inj \otimes -$ exact sequence of flat complexes (of left $R$-modules) $$\ldots \rightarrow F_1 \rightarrow F_0 \rightarrow F_{-1} \rightarrow \ldots $$ such that $G=Ker(F_0 \rightarrow F_{-1})$.
\end{definition}

It is known (\cite{liu:13: Gorenstein.complexes}) that over a right coherent ring $R$, a complex $G$ is Gorenstein flat if and only if each module $G_n$ is Gorenstein flat.

\section{Gorenstein flat covers for complexes}

We prove first that we have a Neeman's type result for Gorenstein flat modules.\\
We will use the following results:\\

\begin{proposition}(\cite{enochs:02:kaplansky}, Prop. 2.10)
For any ring $R$ the class of Gorenstein flat complexes is a Kaplansky class.
\end{proposition}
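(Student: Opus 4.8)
The plan is to exhibit an infinite cardinal $\kappa$, depending only on the cardinality of $R$, witnessing the Kaplansky property: for every Gorenstein flat complex $G$ and every subcomplex $S\subseteq G$ with $\mathrm{Card}(S)\le\kappa$, I want a Gorenstein flat subcomplex $T$ with $S\subseteq T\subseteq G$, $\mathrm{Card}(T)\le\kappa$, and $G/T$ again Gorenstein flat. First I would unfold the definition and fix an exact, $Inj\otimes-$ exact sequence of flat complexes $\mathbf F=(\cdots\to F_1\to F_0\to F_{-1}\to\cdots)$ with $G=\Ker(F_0\to F_{-1})$, and write $Z_i=\Ker(F_i\to F_{i-1})$ for its cosyzygies, so that $Z_0=G$ and each $0\to Z_i\to F_i\to Z_{i-1}\to 0$ is exact with $F_i$ flat. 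The whole strategy is to carve out a $\kappa$-small \emph{subresolution} $\mathbf F'=(\cdots\to F_1'\to F_0'\to F_{-1}'\to\cdots)$, i.e.\ subcomplexes $F_i'\subseteq F_i$ respected by the differentials, so that $\mathbf F'$ is itself a complete flat resolution and so is the termwise quotient $\mathbf F/\mathbf F'$. Then $T=\Ker(F_0'\to F_{-1}')$ and $G/T\cong\Ker(F_0/F_0'\to F_{-1}/F_{-1}')$ are the required Gorenstein flat subcomplex and quotient.

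The organizing idea is purity. For $\mathbf F'$ to qualify I must know that each $F_i'$ and each $F_i/F_i'$ is a flat complex, that $\mathbf F'$ and $\mathbf F/\mathbf F'$ are exact, and that $E\otimes\mathbf F'$ and $E\otimes(\mathbf F/\mathbf F')$ stay exact for every injective complex $E$ of right $R$-modules. I would arrange that each cycle inclusion $Z_i'=\Ker(F_i'\to F_{i-1}')\subseteq Z_i$ is \emph{pure}; purity of a subcomplex of a flat complex makes both subcomplex and quotient flat, which settles the flatness requirement. The decisive point is that purity also disposes of the $Inj\otimes-$ condition for all injectives at once: once $\mathbf F'$ and $\mathbf F/\mathbf F'$ are exact, the cycle sequences $0\to Z_i'\to Z_i\to Z_i''\to 0$ are pure exact, hence direct limits of split sequences, so the long exact sequence of the left derived functors $\mathrm{Tor}_i$ of $E\otimes-$ breaks into short exact sequences $0\to\mathrm{Tor}_1(E,Z_i')\to\mathrm{Tor}_1(E,Z_i)\to\mathrm{Tor}_1(E,Z_i'')\to 0$. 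Since $E\otimes\mathbf F$ is exact we have $\mathrm{Tor}_1(E,Z_i)=0$, and this forces $\mathrm{Tor}_1(E,Z_i')=\mathrm{Tor}_1(E,Z_i'')=0$, i.e.\ both $E\otimes\mathbf F'$ and $E\otimes(\mathbf F/\mathbf F')$ are exact, with no need to restrict $E$ to a set.

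To produce such an $\mathbf F'$ I would run a transfinite closing-off. Starting from a $\kappa$-small pure subcomplex of $G=Z_0$ that contains $S$ (this is the germ of $T$), I would build $\mathbf F'$ as the union of a smooth ascending chain of $\kappa$-small sub-data, where each successor stage (i) enlarges every $F_i'$ and every cycle $Z_i'$ to restore purity in the corresponding flat complex, using that flat complexes admit $\kappa$-small pure subcomplexes through any prescribed $\kappa$-small subset, and (ii) adjoins, for each cycle already present in $\mathbf F'$ or $\mathbf F/\mathbf F'$, a differential-preimage, so that both become exact in the limit. The main obstacle is the coordination of this process across the two index directions at once: the resolution differentials link consecutive $F_i'$, so repairing exactness or purity at one spot cascades to neighbouring degrees, and I must check that this cascade can be contained within a $\kappa$-small chain and that purity, exactness and the flat-complex structure all pass to the union. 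Verifying the requisite purity and derived-tensor facts for the Garc\'{\i}a Rozas tensor product of complexes --- in particular that pure exactness of complexes is preserved by $E\otimes-$ and computes the relevant first derived functor --- is the remaining technical point, but it is the taming of the doubly-infinite cascade that is the real heart of the argument.
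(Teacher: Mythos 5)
The paper offers no proof of this statement---Proposition 1 is imported verbatim from \cite{enochs:02:kaplansky}, Prop.~2.10---so there is no in-paper argument to compare against. Your outline is, in substance, the standard proof of that cited result: carve a $\kappa$-small subresolution with pure cycle inclusions out of a complete flat resolution by a transfinite closing-off, and use the fact that pure exact sequences are direct limits of split ones to propagate the $Inj \otimes -$ exactness to the sub- and quotient resolutions through the first derived functor of Garc\'{\i}a Rozas's tensor product applied to the cycles (via $H_n(E\otimes \mathbf{F})\cong \overline{Tor}_1(E,Z_{n-1}(\mathbf{F}))$ for exact complexes of flat complexes). The two points you defer---the zig-zag achieving purity and exactness simultaneously in all degrees, and the purity and derived-tensor facts for $\otimes$ in $Ch(R)$---are precisely the supporting lemmas of \cite{enochs:02:kaplansky} and of \cite{garcia:99:covers}, so the proposal is correct and follows essentially the same route as the source the paper cites.
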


\begin{proposition} (\cite{enochs:00:relative}, Cor. 2.1.9)
If $R$ is a right coherent ring then the class of Gorenstein flat left $R$-modules is closed under direct limits.
\end{proposition}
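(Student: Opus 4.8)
The plan is to avoid constructing a complete flat resolution for the direct limit directly, and instead to exploit the pure structure of filtered colimits together with character-module duality. Let $(G_\lambda,\phi_{\lambda\mu})$ be a direct system of Gorenstein flat left $R$-modules and set $G=\rlim G_\lambda$. The first step is to record the standard \emph{pure} exact sequence
$$0 \longrightarrow A \longrightarrow \bigoplus_\lambda G_\lambda \longrightarrow G \longrightarrow 0$$
that presents a filtered colimit as a pure quotient of the coproduct of its terms. This sidesteps the fact that the transition maps $\phi_{\lambda\mu}$ do not lift to maps of complete resolutions, which is the real reason one cannot simply pass to the colimit of resolutions.

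The second step is to dualize. Writing $M^{+}=\Hom_{\mathbb{Z}}(M,\mathbb{Q}/\mathbb{Z})$ for the character module, recall that a short exact sequence is pure precisely when its character dual splits; hence $G^{+}$ is a direct summand of $\left(\bigoplus_\lambda G_\lambda\right)^{+}\cong \prod_\lambda G_\lambda^{+}$. Over a right coherent ring the character module of a Gorenstein flat left module is a Gorenstein injective right module, so each $G_\lambda^{+}$ is Gorenstein injective; since the class of Gorenstein injective modules is closed under arbitrary products and under direct summands, $\prod_\lambda G_\lambda^{+}$ and therefore $G^{+}$ are Gorenstein injective. Finally, invoking the converse half of the duality (again over a right coherent ring), $G^{+}$ Gorenstein injective forces $G$ to be Gorenstein flat, as desired.

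As a consistency check, and as the ``negative half'' of any direct attempt, one verifies that $\Tor{i}{R}(I,G)=\rlim \Tor{i}{R}(I,G_\lambda)=0$ for every injective right module $I$ and every $i\ge 1$, using that $Tor$ commutes with direct limits and that $Inj\otimes -$ exactness passes to colimits; this part is automatic. The genuine obstacle lies in the ``positive half'': building the $Inj\otimes -$ exact flat coresolution of $G$, which is not produced by any colimit procedure. The main work of the argument is therefore concentrated entirely in the duality statement ``$M$ Gorenstein flat $\iff M^{+}$ Gorenstein injective'' over a right coherent $R$, in \emph{both} directions, and this is exactly where right coherence is essential: it guarantees that character modules of flats are injective on the correct side and that a flat coresolution dualizes to a genuine injective coresolution. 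Once that duality, together with the closure of Gorenstein injectives under products and summands, is in hand, purity does all the remaining work and converts the troublesome coresolution construction into a trivial direct-summand argument.
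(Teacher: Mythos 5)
Your argument is correct, and since the paper states this proposition purely as a citation to Enochs--Jenda (giving no proof of its own), the right comparison is to the standard argument behind that citation --- which is essentially what you have written: present $G=\varinjlim G_\lambda$ as a pure quotient of $\bigoplus_\lambda G_\lambda$, dualize so that $G^{+}$ becomes a direct summand of $\prod_\lambda G_\lambda^{+}$, and invoke the duality ``$M$ Gorenstein flat $\iff$ $M^{+}$ Gorenstein injective'' over a right coherent ring (Holm, \emph{Gorenstein homological dimensions}, Thm.~3.6) together with closure of Gorenstein injectives under products and direct summands. The only, harmless, imprecision is that the implication ``$M$ Gorenstein flat $\Rightarrow M^{+}$ Gorenstein injective'' holds over any ring; right coherence is needed only for the converse, which is precisely the step that reconstructs the $Inj\otimes -$ exact flat coresolution of $G$, exactly as you observe.
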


We will use the notation $Ch(\mathcal{GF})$ for the class of complexes of Gorenstein flat left $R$-modules.\\

Our first result is:\\
\begin{proposition}
If $R$ is a right coherent ring then the inclusion functor $\textbf{K}(\mathcal{GF}) \rightarrow \textbf{K}(R)$ has a right adjoint.
\end{proposition}

\begin{proof}
By Propositions 1 and 2, the cotorsion pair $(\mathcal{GF}, \mathcal{GF}^\bot)$ is cogenerated by a set. Then by \cite{enochs:11:relative2}, Th. 7.2.14, the pair $(Ch(\mathcal{GF}), Ch(\mathcal{GF})^\bot)$ is a cotorsion pair cogenerated by a set. Then again by \cite{enochs:11:relative2}, Th. 5.1.7, the inclusion functor $\textbf{K}(\mathcal{GF}) \rightarrow \textbf{K}(R)$ has a right adjoint.
\end{proof}

We show that if $R$ is a right coherent ring then the class of Gorenstein flat complexes (of left $R$-modules) is covering in $Ch(R)$.\\

\begin{proposition}
Let $R$ be a right coherent ring. Then every complex of left $R$-modules has a Gorenstein flat cover.
\end{proposition}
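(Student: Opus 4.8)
The plan is to realize the Gorenstein flat complexes as a precovering class that is closed under direct limits, and then to appeal to the general principle that, in a Grothendieck category, a precovering class closed under direct limits is automatically covering. Since $Ch(R)$ is a Grothendieck category in which direct limits are exact, and since over the right coherent ring $R$ the class of Gorenstein flat complexes coincides with $Ch(\mathcal{GF})$ by \cite{liu:13: Gorenstein.complexes}, it suffices to verify these two properties for $Ch(\mathcal{GF})$.

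For the precovering property I would reuse the cotorsion pair already produced in the proof of Proposition 3. There it was shown that $(Ch(\mathcal{GF}), Ch(\mathcal{GF})^\bot)$ is a cotorsion pair cogenerated by a set. By the Eklof--Trlifaj theorem (see \cite{enochs:11:relative2}), such a cotorsion pair is complete, so every complex $X$ of left $R$-modules sits in a short exact sequence $0 \to B \to G \to X \to 0$ with $G \in Ch(\mathcal{GF})$ and $B \in Ch(\mathcal{GF})^\bot$. The induced map $G \to X$ is then a special $Ch(\mathcal{GF})$-precover; in particular $Ch(\mathcal{GF})$ is precovering, and by the identification above these are exactly Gorenstein flat precovers.

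For the closure under direct limits I would argue degreewise. Direct limits in $Ch(R)$ are formed in each degree, so the $n$-th component of a direct limit of complexes in $Ch(\mathcal{GF})$ is a direct limit of Gorenstein flat left $R$-modules. By Proposition 2 the right coherence of $R$ guarantees that this class is closed under direct limits, hence each component of the limit is Gorenstein flat, and so by \cite{liu:13: Gorenstein.complexes} the limit again lies in $Ch(\mathcal{GF})$.

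Combining the two facts through the covering theorem of \cite{enochs:00:relative} then yields the conclusion. The main obstacle is not any single computation but the correct assembly of the module-level inputs: one must ensure that the characterization of Gorenstein flat complexes as $Ch(\mathcal{GF})$, valid exactly because $R$ is right coherent, is what transports Propositions 1 and 2 from modules to complexes, and that the hypotheses of the ``precovering plus closure under direct limits implies covering'' theorem are genuinely met in the Grothendieck category $Ch(R)$ rather than only in a module category. Once these two points are secured, the existence of Gorenstein flat covers is immediate.
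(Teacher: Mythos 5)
Your proposal is correct and follows essentially the same route as the paper: completeness of the set-cogenerated cotorsion pair $(Ch(\mathcal{GF}), Ch(\mathcal{GF})^\bot)$ gives precovering, Proposition 2 (applied degreewise) gives closure under direct limits, the general ``precovering plus closed under direct limits implies covering'' theorem finishes, and the identification with Gorenstein flat complexes comes from \cite{liu:13: Gorenstein.complexes}. The only difference is that you spell out the degreewise argument and the Grothendieck-category setting explicitly, which the paper leaves implicit.
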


\begin{proof}
Since the cotorsion pair $(Ch(\mathcal{GF}), Ch(\mathcal{GF})^\bot)$ is cogenerated by a set, it is a complete one. So the class $Ch(\mathcal{GF})$ is precovering. By Proposition 2, this class is also closed under direct limits, and therefore it is a covering class in $Ch(R)$. By \cite{liu:13: Gorenstein.complexes} Prop. 3.2, this is the class of Gorenstein flat complexes.
\end{proof}

\section{Gorenstein projective (pre)covers}

We recall that a ring $R$ is left n-perfect if for any flat left $R$-module $F$, $p.d._RF \le n$.\\
We also recall that the character module of a left $R$-module $M$ is the right $R$-module $M^+ = Hom_Z(M, Q/Z)$. Then $M^{++} = (M^+)^+$, for any $_RM$. In the following we use the notation $Flat^{++}$ for the class of all left $R$-modules of the form $C^{++}$, where $C$ is any flat left $R$-module.

We prove that the class of Gorenstein projective modules is special precovering over a right coherent ring $R$ that is left n-perfect.\\
We begin with the following:\\
\begin{lemma}
Let $R$ be a right coherent ring that is left n-perfect and let $F$ be a flat left $R$-module. Then there exists an exact sequence $$0 \rightarrow F \rightarrow S^0 \rightarrow S^1 \rightarrow \ldots \rightarrow S^{n-1} \rightarrow C \rightarrow 0$$ with all $S^i$ in $Flat^{++}$ and with $C$ pure injective and flat.
\end{lemma}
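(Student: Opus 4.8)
The plan is to construct the coresolution by iterating the natural double-character embedding, using right coherence to keep all the terms and cokernels flat, and using left $n$-perfectness to force the right kind of cokernel after exactly $n$ steps.

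First I would record the behaviour of the functor $(-)^{++}$ over a right coherent ring. For a flat left $R$-module $M$, the character module $M^+$ is an injective right $R$-module; since $R$ is right coherent, the character module of an injective right module is flat, so $M^{++}\in Flat^{++}$ is flat. It is also pure injective (being a character module), and the unit $\eta_M\colon M\to M^{++}$ is a pure monomorphism. Hence $M$ sits as a pure submodule of the flat module $M^{++}$, and a pure quotient of a flat module is flat, so $M^{++}/M$ is again flat. Setting $Q_0=F$ and $Q_{i+1}=M^{++}/M$ for $M=Q_i$, each step gives a pure short exact sequence $0\to Q_i\to S^i\to Q_{i+1}\to 0$ with $S^i:=Q_i^{++}\in Flat^{++}$ and $Q_{i+1}$ flat; splicing the first $n$ of these yields the exact sequence $0\to F\to S^0\to\cdots\to S^{n-1}\to C\to 0$ with $C:=Q_n$ flat. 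It then remains only to prove that $C$ is pure injective.

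The crucial step is to show that $C$ is cotorsion, and this is where left $n$-perfectness enters. Each $S^i$ is flat and pure injective, hence cotorsion; moreover, since every syzygy of a flat module is again flat, dimension shifting gives $Ext^{j}(G,S^i)\cong Ext^1(\Omega^{j-1}G,S^i)=0$ for every flat $G$ and every $j\ge 1$. Feeding this vanishing into the $n$ short exact sequences above and shifting along them yields an isomorphism $Ext^1(G,C)\cong Ext^{n+1}(G,F)$ for every flat $G$. Because $R$ is left $n$-perfect we have $p.d._R G\le n$, so $Ext^{n+1}(G,F)=0$; thus $Ext^1(G,C)=0$ for all flat $G$, i.e. $C$ is cotorsion.

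Finally I would upgrade ``flat and cotorsion'' to ``pure injective''. Consider the pure exact sequence $0\to C\to C^{++}\to C^{++}/C\to 0$, whose cokernel is flat as above. Since $C$ is cotorsion, $Ext^1(C^{++}/C,C)=0$, so this sequence splits and $C$ is a direct summand of the pure injective module $C^{++}$; hence $C$ is pure injective, as required. The step I expect to be the main obstacle is the cotorsion computation: one must first verify that syzygies of flat modules are flat (so that all higher $Ext$ groups of the pure injective terms $S^i$ vanish) and then organise the dimension shift so that the length-$n$ resolution combines with $p.d._R G\le n$ to annihilate $Ext^{n+1}(G,F)$.
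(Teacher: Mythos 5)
Your proposal is correct and follows essentially the same route as the paper: iterate the pure embedding $F \rightarrow F^{++}$ (right coherence keeping every term and cokernel flat), kill $Ext^j(K,S^i)$ for flat $K$ via pure injectivity of the character modules $S^i$, and invoke left $n$-perfectness to get the decisive vanishing after $n$ steps. The only divergence is the endgame: the paper deduces $Ext^1(C^{n-1},C^{n-2})=0$ and splits the last short exact sequence, realizing $C$ as a direct summand of $S^{n-1}$, whereas you prove $Ext^1(G,C)\cong Ext^{n+1}(G,F)=0$ for all flat $G$ and split $0 \rightarrow C \rightarrow C^{++} \rightarrow C^{++}/C \rightarrow 0$ instead; both exhibit $C$ as a summand of a pure injective module, and your version has the cleaner bookkeeping, since the dimension shift lands exactly on $Ext^{n+1}(G,F)$, which is precisely what $p.d._R G \le n$ annihilates.
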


\begin{proof}
Since $R$ is right coherent we have that a module $F$ is flat if and only if $F^{++}$ is flat (\cite{cheatham:81:flat}, Theorem 1)\\
The sequence $0 \rightarrow F \rightarrow F^{++} \rightarrow \frac{F^{++}}{F} \rightarrow 0$ is pure exact with $F^{++}$ flat. Therefore the module $\frac{F^{++}}{F}$ is flat. Repeating we obtain an exact complex $$0 \rightarrow F \rightarrow S^0 \rightarrow S^1 \rightarrow S^2 \ldots $$ with each $S^i$ in $Flat^{++}$ and with each $C^i = Im (S^i \rightarrow S^{i+1})$ flat.\\
Let $K$ be any flat $R$-module. The exact sequence $0 \rightarrow F \rightarrow S^0 \rightarrow C^0 \rightarrow 0$ gives a long exact sequence $Ext^1(K, S^0) \rightarrow Ext^1(K, C^0) \rightarrow Ext^2(K,F) \rightarrow Ext^2(K,S^0) \rightarrow \ldots $.\\
Since $K$ is flat and $S^0$ is in $Flat^{++}$ hence pure injective, we have $Ext^i(K, S^0) =0$ for all $i \ge 1$. Therefore $Ext^j(K, C^0) \simeq Ext^{j+1} (K, F)$ for all $j \ge 1$.\\
Similarly $Ext^j(K, C^{n-2}) \simeq Ext^{j+n}(K, F)$ for all $j \ge 1$. But the ring $R$ is left n-perfect and $K$ is flat so $proj.dim (K) \le n$. Then $Ext^{j+n} (K,F) = 0$ for all $j \ge 1$.\\
 So $Ext^j(K, C^{n-2}) =0$ for all $j \ge 1$ and for any flat module $K$.\\
 In particular we have that $Ext^1 (C^{n-1}, C^{n-2}) = 0$. This means that the exact sequence $0 \rightarrow C^{n-2} \rightarrow S^{n-1} \rightarrow C^{n-1} \rightarrow 0$ is split exact. Thus $C^{n-1}$ is a direct summand of $S^{n-1} \in Flat^{++}$ and so $C = C^{n-1}$ is pure injective.
\end{proof}

\begin{lemma}
Let $R$ be a right coherent and left n-perfect ring and let $G$ be a (left) Gorenstein flat module. Then $Ext^i(G,F) = 0$ for any flat and cotorsion module $F$, for any $i \ge 1$.
\end{lemma}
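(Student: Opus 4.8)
The plan is to convert the defining property of a Gorenstein flat module --- exactness after tensoring with injectives --- into the required $Ext$-vanishing, by passing to character modules and then feeding in Lemma 1. Throughout I use the natural isomorphism $Ext^i_R(G,N^+)\cong (Tor_i^R(N,G))^+$, valid for any right $R$-module $N$ and all $i$. Since $G$ is Gorenstein flat, its complete flat resolution $\mathbf{F}:\ \cdots\to F_1\to F_0\to F_{-1}\to\cdots$ stays exact after applying $I\otimes_R-$ for every injective right module $I$; reading its left half as a flat resolution of $G$ shows $Tor_i^R(I,G)=0$ for all such $I$ and all $i\ge 1$, whence $Ext^i(G,I^+)=0$ for all injective right $I$ and all $i\ge 1$. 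The same holds with $G$ replaced by any Gorenstein flat module. I also record that the flat cotorsion pair is hereditary (the flats are closed under kernels of epimorphisms, by the long exact sequence of $Tor$), so $Ext^i(K,F)=0$ whenever $K$ is flat, $F$ is cotorsion and $i\ge 1$.

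Step 1: vanishing in high degrees. I would first show $Ext^j(H,F)=0$ for every Gorenstein flat $H$ and every $j>n$. The bridge to Lemma 1 is that every module appearing there is a character module of an injective right module: if $C$ is flat then $C^+$ is an injective right $R$-module, so a module in $Flat^{++}$ equals $(C^+)^+=I^+$ with $I=C^+$ injective, and a flat pure injective module is a direct summand of its own $C^{++}=I^+$. By the previous paragraph this gives $Ext^i(H,S)=0$ for all $i\ge 1$ whenever $S\in Flat^{++}$ or $S$ is flat and pure injective. Applying Lemma 1 to the flat module $F$ produces an exact sequence $0\to F\to S^0\to\cdots\to S^{n-1}\to C\to 0$ all of whose terms to the right of $F$ are $Ext^{\ge 1}(H,-)$-acyclic; splitting it into short exact sequences and shifting dimensions along this length-$n$ coresolution yields $Ext^j(H,F)=0$ for all $j\ge n+1$.

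Step 2: descent to degree one. For the remaining degrees $1\le i\le n$ I would use the right half of the complete flat resolution of $G$, which supplies short exact sequences $0\to G^{(-k)}\to F_{-k}\to G^{(-k-1)}\to 0$ with $F_{-k}$ flat and every cosyzygy $G^{(-k)}$ again Gorenstein flat. Applying $\Hom(-,F)$ and using that $F$ is flat and cotorsion, so that $Ext^{\ge 1}(F_{-k},F)=0$, gives the shift $Ext^i(G^{(-k)},F)\cong Ext^{i+1}(G^{(-k-1)},F)$ for every $i\ge 1$. Iterating $n$ times,
\[ Ext^i(G,F)\cong Ext^{i+n}(G^{(-n)},F), \]
and since $G^{(-n)}$ is Gorenstein flat and $i+n>n$, Step 1 makes the right-hand side vanish. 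Hence $Ext^i(G,F)=0$ for all $i\ge 1$.

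The crux is Step 1: turning the tensor-exactness condition into honest $Ext$-vanishing over a bounded range of degrees. Two observations carry it: character modules trade the available $Tor$-vanishing against injectives for $Ext$-vanishing against modules of the form $I^+$, and Lemma 1 (which is exactly where left $n$-perfectness enters) provides a coresolution of finite length $n$ built out of precisely such modules; without $n$-perfectness this coresolution need not terminate and the degree bound is lost. Once Step 1 is in place, the descent in Step 2 is routine, the only point to keep in mind being that the flat cotorsion pair is hereditary, which is what lets the cosyzygy shift run through all positive degrees.
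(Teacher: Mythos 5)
Your proof is correct and follows essentially the same two-step route as the paper: first high-degree vanishing $Ext^{\,j}(G,F)=0$ for $j>n$ via Lemma 1 and the fact that modules in $Flat^{++}$ are character modules of injectives, then descent to all degrees $i\ge 1$ by dimension-shifting along the right half of the complete flat resolution using that $F$ is cotorsion. The only (immaterial) difference is that you derive $Ext^{\,i}(G,I^+)=0$ from $Ext^{\,i}(G,N^+)\cong (Tor_i(N,G))^+$ and the $Tor$-vanishing built into the definition, whereas the paper invokes $Ext^{\,i}(G,F^{++})\cong Ext^{\,i}(F^+,G^+)=0$ using that $G^+$ is Gorenstein injective.
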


\begin{proof}
Let $F$ be a flat and cotorsion $R$-module. By Lemma 1 there is an exact complex $0 \rightarrow F \rightarrow S^0 \rightarrow S^1 \rightarrow \ldots \rightarrow S^{n-1} \rightarrow C \rightarrow 0$ with $S^i$ in $Flat^{++}$ for all i and with $C$ a pure injective module. Also, by the proof of Lemma 1, $C$ is a direct summand of $S^{n-1}$.\\
The exact sequence $0 \rightarrow F \rightarrow S^0 \rightarrow C^0 \rightarrow 0$ gives a long exact sequence $Ext^1(G, S^0) \rightarrow Ext^1(G, C^0) \rightarrow Ext^2(G, F) \rightarrow Ext^2(G,S^0)$. \\
We have $S^0 = F^{++}$, so for any $i \ge 1$, $Ext^i(G,S^0) = Ext^i(G, F^{++}) \simeq Ext^i(F^+, G^+) = 0$ because $F^+$ is injective and $G^+$ is Gorenstein injective. Then by the above, we have that $Ext^1(G,C^0) \simeq Ext^2(G,F)$. The same argument gives that $Ext^i(G, C^0) \simeq Ext^{i+1} (G, F)$ for any $i \ge 1$.\\
Similarly, $Ext^i(G, C^1) \simeq Ext^{i+1}(G, C^0) \simeq Ext^{i+2}(G, F)$ for all $i \ge 1$. Continuing we obtain that $Ext^i(G, C^{n-1}) \simeq Ext^{i+1}(G, C^{n-2}) \simeq \ldots \simeq Ext^{i+n}(G, F)$, for any $i \ge 1$.\\
But $C = C^{n-1}$ is a direct summand of $S^{n-1} \in Flat^{++}$, and $Ext^i(G, S^{n-1}) =0$ for all i. It follows that $Ext^i(G, C)=0$ for all $i \ge 1$.\\
Thus $Ext^{i+n}(G, F)=0$ for any $i \ge 1$, for any flat $R$-module $F$ and for any Gorenstein flat $R$-module $G$.\\

Since $G$ is a Gorenstein flat $R$-module, there is an exact complex $0 \rightarrow G \rightarrow F^0 \rightarrow F^1 \rightarrow \ldots$ with each $F^l$ flat and with each $G^l = Im (F^l \rightarrow F^{l+1})$ Gorenstein flat. Also, since $F$ is cotorsion, we have $Ext^i(F^l, F)=0$ for all $l \ge 0$.\\
The short exact sequence $0 \rightarrow G \rightarrow F^0 \rightarrow G^0 \rightarrow 0$ gives an exact sequence $0 = Ext^1(F^0,F) \rightarrow Ext^1(G,F) \rightarrow Ext^2(G^0 , F) \rightarrow Ext^2(F^0, F) = 0$. So $Ext^1(G,F) \simeq Ext^2(G^0, F)$. Similarly $Ext^i (G,F) \simeq Ext^{i+1}(G^0, F)$ for all $i \ge 1$. \\ Continuing we obtain that $Ext^i(G,F) \simeq Ext^{i+n}(G^{n-1},F)$ for all $i \ge 1$. By the above, $Ext^{i+n}(G^{n-1},F)=0$ for all $i \ge 1$. Therefore $Ext^i(G,F) = 0$ for any flat and cotorsion module $F$, for all $i \ge 1$.

\end{proof}

We recall that an exact complex $C$ of flat $R$-modules is N-totally acyclic if $E \otimes C$ is still exact for any injective $R$-module $E$. In particular, if $C$ is N-totally acyclic then for each integer n, $Z_n(C)$ is Gorenstein flat.


\begin{lemma}
Let $R$ be a right coherent ring that is left n-perfect. If $C$ is an N-totally acyclic complex of projective modules, then $C$ is $Hom(-,Q)$ exact for any flat $R$-module $Q$.
\end{lemma}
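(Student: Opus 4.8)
The plan is to build a \emph{finite} coresolution of $Q$ by flat and cotorsion modules, apply $Hom(C,-)$ to it, and then use that each resulting Hom-complex is exact together with a dimension-shift (acyclic assembly) argument. The finiteness of the coresolution, supplied by Lemma 1, is what makes the argument close, and this is where left $n$-perfectness is used.

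First I would record the reduction that lets Lemma 2 enter. Since $C$ is N-totally acyclic, each cycle $Z_m(C)$ is Gorenstein flat (the remark preceding the statement), and since $C$ is a complex of projectives, applying $Hom(-,W)$ to each short exact sequence $0 \to Z_m(C) \to C_m \to Z_{m-1}(C) \to 0$ gives a four-term exact sequence in which the $Ext^1(C_m,W)$ term vanishes. Splicing these in the usual way shows that, for an exact complex of projectives $C$, the complex $Hom(C,W)$ is exact if and only if $Ext^1(Z_m(C),W)=0$ for every $m$. In particular, by Lemma 2, $Hom(C,W)$ is exact whenever $W$ is flat and cotorsion.

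Next I would invoke Lemma 1 for the flat module $Q$, obtaining an exact sequence $0 \to Q \to S^0 \to \cdots \to S^{n-1} \to C' \to 0$ with each $S^i$ in $Flat^{++}$ and $C'$ pure injective and flat (I write $C'$ for the cokernel term to avoid clashing with our complex $C$). Every one of $S^0,\dots,S^{n-1},C'$ is then flat and cotorsion, since pure injective modules are cotorsion, so by the previous paragraph each of the complexes $Hom(C,S^i)$ and $Hom(C,C')$ is exact. Because every term $C_m$ is projective, applying $Hom(C_m,-)$ to the coresolution is exact in each degree $m$, and assembling over $m$ yields an exact sequence of complexes of abelian groups $0 \to Hom(C,Q) \to Hom(C,S^0) \to \cdots \to Hom(C,S^{n-1}) \to Hom(C,C') \to 0$.

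Finally I would conclude by a dimension shift. Writing $W^{i}=\operatorname{im}\bigl(Hom(C,S^i)\to Hom(C,S^{i+1})\bigr)$ for the images (so $W^{-1}\cong Hom(C,Q)$ and the final image is $Hom(C,C')$), I break the above into short exact sequences of complexes $0 \to W^{i-1} \to Hom(C,S^i) \to W^i \to 0$ whose middle terms are exact; the long exact cohomology sequences then give $H^k(Hom(C,Q)) \cong H^{k-1}(W^0) \cong \cdots$, and after $n+1$ steps this is identified with a cohomology group of the exact complex $Hom(C,C')$, hence vanishes. Thus $Hom(C,Q)$ is exact, i.e. $C$ is $Hom(-,Q)$ exact. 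The main point, and the only obstacle, is the \emph{finiteness} of the coresolution: left $n$-perfectness (through Lemma 1) bounds its length, and this bound is exactly what the assembly needs, whereas for a general flat $Q$ a naive $Ext$-shift only moves the dimension upward and never reaches $Ext^1$.
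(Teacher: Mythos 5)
Your proof is correct and follows essentially the same route as the paper: both rest on Lemma 1's finite coresolution of $Q$ by flat pure-injective (hence flat and cotorsion) modules, together with Lemma 2's vanishing of $Ext^{i}(Z_m(C),-)$, $i\ge 1$, on such modules. The only difference is in the final assembly --- the paper dimension-shifts $Ext$ groups of the cycle modules twice (along the coresolution to get vanishing in degrees $>n$, then along $C$ itself, using that the cycles are syzygies of one another, to push this down to degree $1$), whereas you shift at the level of the Hom-complexes via long exact cohomology sequences, which reaches the same conclusion slightly more directly.
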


\begin{proof}
Let $Q$ be any flat $R$-module. By Lemma 1 there is an exact complex $0 \rightarrow Q \rightarrow S^0 \rightarrow S^1 \rightarrow \ldots \rightarrow S^n \rightarrow 0$ with each $S^i$ pure injective and flat.\\
Let $M=Z_0(C)$. Since $M$ is Gorenstein flat we have that $Ext^j(M,K)=0$  for any flat and cotorsion module $K$ and any $j \ge 1$ (by Lemma 2). In particular, $Ext^j(M,S^i)=0$ for all $j \ge 1$, for all $0 \le i \le n$. So by the above, we have that $Ext^i(M,Q)=0$ for all $i \ge n+1$.\\
But if $T = Z_n(C)$ then since each module $C_i$ is projective we have that $Ext^j(M,Q) \simeq Ext^{j + n} (T,Q)$ for all $j \ge 1$. So we have that $Ext^i(T,Q)=0$ for all $i \ge 1$. But if we replace $M$ with $Z_{-n}(C)$ then a similar argument gives that $Ext^i(M,Q) =0$ for all $i \ge 1$. Similarly, $Ext^i(Z_j(C), Q)=0$, for all j and for all $i \ge 1$. So $Hom(C,Q)$ is an exact complex for any flat module $Q$.
\end{proof}

We recall that an exact complex $C$ of projective modules is called totally acyclic if $Hom(C,P)$ is exact for any projective module $P$.\\

\begin{corollary}
Let $R$ be a right coherent ring that is left n-perfect. Then any N-totally acyclic complex of projective modules is totally acyclic
\end{corollary}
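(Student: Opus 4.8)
The plan is to observe that the Corollary is an immediate consequence of Lemma 3, once we record the trivial but essential inclusion that every projective module is flat. First I would unwind the hypotheses: by definition, an N-totally acyclic complex $C$ of projective modules is in particular an exact complex each of whose terms $C_n$ is projective. Since projective modules are flat, such a $C$ is a fortiori an N-totally acyclic complex of flat modules, so the hypotheses of Lemma 3 hold verbatim. Applying Lemma 3 then yields that $Hom(C,Q)$ is an exact complex for every flat left $R$-module $Q$.

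The second and final step is simply to specialize the flat module $Q$. Because every projective $R$-module $P$ is flat, the conclusion of Lemma 3 applied to $Q=P$ shows that $Hom(C,P)$ is exact for every projective module $P$. Together with the exactness of $C$ itself, which is already built into the definition of N-totally acyclic, this is exactly the definition of a totally acyclic complex of projectives. Hence $C$ is totally acyclic.

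I do not expect a genuine obstacle here, since all the substantive work has already been carried out upstream: the $Flat^{++}$ resolutions of Lemma 1, the vanishing $Ext^i(G,F)=0$ for $G$ Gorenstein flat and $F$ flat cotorsion from Lemma 2, and the dimension-shifting argument in Lemma 3 that controls $Ext^i(Z_j(C),Q)$ and thereby forces $Hom(C,Q)$ to be exact for flat $Q$. The only point to verify for the Corollary is the containment of the projectives inside the flat modules, so that the strictly stronger Lemma 3 conclusion specializes to the defining condition of total acyclicity; the left $n$-perfect hypothesis is what made Lemma 3 available and need not be invoked a second time.
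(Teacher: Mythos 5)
Your proof is correct and follows the same route as the paper: apply Lemma 3 to get exactness of $Hom(C,Q)$ for all flat $Q$, then specialize to $Q$ projective (projectives being flat) to conclude total acyclicity. The only minor quibble is that Lemma 3 is already stated for complexes of projective modules, so no relaxation of its hypotheses to flat terms is needed; the inclusion of projectives in flats is only used to specialize the conclusion, exactly as in the paper's one-line argument.
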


\begin{proof}
By Lemma 3, for any N-totally acyclic complex of projective modules, $C$, the complex $Hom(C,P)$ is still exact, for any projective module $P$. It follows that $C$ is totally acyclic.
\end{proof}

\begin{proposition}
Let $R$ be a right coherent ring. If $R$ is left $n$-perfect then every Gorenstein flat $R$-module $M$ has Gorenstein projective dimension less than or equal to $n$.
\end{proposition}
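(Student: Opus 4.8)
The plan is to produce, from the hypothesis that $M$ is Gorenstein flat, an explicit exact sequence $0 \to G \to Q_{n-1} \to \cdots \to Q_0 \to M \to 0$ in which every $Q_i$ is projective and $G$ is Gorenstein projective; by the very definition of Gorenstein projective dimension this gives that the Gorenstein projective dimension of $M$ is at most $n$. Since projective modules are Gorenstein projective, the whole problem reduces to exhibiting a single module $G$ --- an $n$-th syzygy of $M$ --- as a cycle of a totally acyclic complex of projectives. The main engine will be Corollary 1: it suffices to realize $G$ as a cycle of an N-totally acyclic complex of projectives (an exact complex of projectives that stays exact after applying $E \otimes -$ for every injective $E$), because Corollary 1 then upgrades this complex to a totally acyclic one, and the cycles of a totally acyclic complex of projectives are precisely the Gorenstein projective modules.

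To build such a complex I would start from the datum defining Gorenstein flatness: an exact, $Inj \otimes -$ exact complex $\mathbf{F} : \cdots \to F_1 \to F_0 \to F_{-1} \to \cdots$ of flat modules with $M = \ker(F_0 \to F_{-1})$; this is an N-totally acyclic complex of flats, so all of its cycles are Gorenstein flat. The left $n$-perfect hypothesis enters now: every flat module has projective dimension at most $n$, so each term $F_k$ admits a projective resolution of length $\le n$. I would then form a Cartan--Eilenberg projective resolution of the whole complex $\mathbf{F}$ and pass to its total complex $\mathbf{P}$, a two-sided complex of projective modules. Truncating $\mathbf{P}$ on the left recovers an ordinary projective resolution of $M$, and --- because the vertical resolutions have length at most $n$ --- the gluing of the left (resolution) part to the right (coresolution) part takes place exactly $n$ steps in; the cycle of $\mathbf{P}$ sitting at that junction is an $n$-th syzygy of $M$, which I take to be $G$.

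It then remains to check that $\mathbf{P}$ is N-totally acyclic. Exactness of $\mathbf{P}$ follows from the exactness of $\mathbf{F}$ together with the boundedness (length $\le n$) of the vertical resolutions, so there are no convergence problems and the total complex is acyclic. For the tensor condition, fix an injective module $E$: tensoring each vertical resolution with $E$ computes $Tor_{>0}(E, F_k)$, which vanishes because $F_k$ is flat, while $E \otimes \mathbf{F}$ is exact since $\mathbf{F}$ is $Inj \otimes -$ exact; a bounded double-complex (spectral sequence) comparison then yields exactness of $E \otimes \mathbf{P}$. Hence $\mathbf{P}$ is N-totally acyclic, Corollary 1 makes it totally acyclic, and its cycle $G$ is Gorenstein projective, which completes the argument.

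The hard part is precisely this totalization step: verifying that the totalized Cartan--Eilenberg resolution of a two-sided acyclic flat complex stays exact \emph{and} stays exact after $E \otimes -$, and confirming that the relevant cycle stabilizes as an $n$-th syzygy of $M$. The $n$-perfect hypothesis is exactly what forces the vertical resolutions to be finite and makes the junction occur at the finite level $n$; without it neither the acyclicity nor the identification of $G$ would be available. As an alternative (or as a supporting computation) one can use Lemma 1 and Lemma 2 directly: embedding a projective, hence flat, module $Q$ into a sequence $0 \to Q \to S^0 \to \cdots \to S^{n-1} \to C \to 0$ with flat cotorsion terms and then dimension shifting gives $Ext^i(M, Q) = 0$ for all $i > n$, which is the cohomological shadow of the bound on the Gorenstein projective dimension and can be fed into the verification that $G$ is Gorenstein projective. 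Even along this route, however, one must still construct the right half of the complete projective resolution, where the same totalization reappears.
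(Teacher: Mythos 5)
Your overall strategy is the same as the paper's: use left $n$-perfectness to resolve the defining exact and $Inj\otimes -$ exact complex of flats $\mathbf{F}$ by projectives in $n$ steps, observe that the resulting $n$-th syzygy is an exact, $Inj\otimes -$ exact complex of projectives, upgrade it to a totally acyclic complex via Corollary 1, and read off a length-$n$ Gorenstein projective resolution of $M$. The gap is the totalization step, and it is not merely bookkeeping. If you take a Cartan--Eilenberg resolution $P_{\bullet,\bullet}$ of the two-sided complex $\mathbf{F}$ and pass to $\mathrm{Tot}(P)$, the cycle at the ``junction'' is \emph{not} an $n$-th syzygy of $M$. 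Already for $n=1$ a direct computation gives a short exact sequence $0 \to Z_1(P_{\bullet,0}) \to Z_1(\mathrm{Tot}(P)) \to Z_0(P_{\bullet,1}) \to 0$, where $Z_0(P_{\bullet,1})$ is the honest first syzygy $\Omega^1 M$ and $Z_1(P_{\bullet,0})$ is projective: the cycle of the total complex is a generally non-split extension with the projective on the wrong side. Knowing that $Z_1(\mathrm{Tot}(P))$ is Gorenstein projective then only yields $Gpd(\Omega^1 M)\le 1$, hence $Gpd(M)\le 2$ --- a weaker bound --- since the class of Gorenstein projectives is not closed under cokernels of monomorphisms from projectives (e.g.\ $\mathbb{Z}/2$ over $\mathbb{Z}$). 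In general the cycles of $\mathrm{Tot}(P)$ are iterated extensions mixing syzygies of all the cycles $Z_k(\mathbf{F})$, and no exact sequence $0\to G\to Q_{n-1}\to\cdots\to Q_0\to M\to 0$ with the $Q_i$ projective and $G$ a cycle of $\mathrm{Tot}(P)$ falls out. You correctly flag this as the hard part, but as stated it is the step that fails.

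The fix is to not totalize. Keep the Cartan--Eilenberg data as a length-$n$ projective resolution $0\to C\to P_{n-1}\to\cdots\to P_0\to \mathbf{F}\to 0$ in the category $Ch(R)$: the rows are exact complexes with projective cycles, i.e.\ projective objects of $Ch(R)$. Degreewise, $pd(F_i)\le n$ forces each $C_i$ to be projective, and $C$ inherits exactness and $Inj\otimes -$ exactness from $\mathbf{F}$ and the $P_k$, so $C$ is an N-totally acyclic complex of projectives, hence totally acyclic by Corollary 1, and every $Z_j(C)$ is Gorenstein projective. Now apply the cycle functor $Z_j(-)$, which is exact on short exact sequences of exact complexes: this yields $0\to Z_j(C)\to Z_j(P_{n-1})\to\cdots\to Z_j(P_0)\to Z_j(\mathbf{F})\to 0$ with projective middle terms, and $j=0$ is exactly the sequence you need for $Gpd(M)\le n$. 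This is the paper's argument; your spectral-sequence verification of N-total acyclicity and the appeal to Corollary 1 are the right ingredients, but they must be applied to the syzygy row $C$ rather than to the total complex.
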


\begin{proof}
Since $M$ is Gorenstein flat there is an exact and $Inj \otimes -$ exact complex $\overline{F} = \ldots \rightarrow F_1 \rightarrow F_0 \rightarrow F_{-1} \rightarrow \ldots $ such that $M = Ker (F_0 \rightarrow F_{-1})$. \\
Consider a partial projective resolution of $\overline{F} : 0 \rightarrow C \rightarrow P_{n-1} \rightarrow \ldots \rightarrow P_0 \rightarrow \overline{F} \rightarrow 0$. Since $\overline{F}$ is $Inj \otimes -$ exact and each $P_k$ is a projective complex, it follows that $C$ is exact and $Inj \otimes -$ exact.\\
For each $i$ we have an exact complex $0 \rightarrow C_i \rightarrow P_{i,n-1} \rightarrow P_{i, n-2} \rightarrow \ldots \rightarrow \P_{i,0} \rightarrow F_i \rightarrow 0$ with $P_{i,k} \in Proj$. Since the projective dimension of $F_i$ is less than or equal to n, it follows that each $C_i$ is projective. So $C$ is an exact and $Inj \otimes -$ exact complex of projective modules. By Lemma 3, $C$ is totally acyclic. Then $Z_j(C)$ is Gorenstein projective for each j. The exact sequence of exact complexes $0 \rightarrow C \rightarrow P_{n-1} \rightarrow \ldots \rightarrow P_0 \rightarrow \overline{F} \rightarrow 0$ gives an exact sequence of modules $ 0 \rightarrow Z_j(C) \rightarrow Z_j(P_{n-1}) \rightarrow \ldots \rightarrow Z_j(\overline{F}) \rightarrow 0$. By the above each $Z_j(C)$ is Gorenstein projective. Since each $P_i$ is a projective complex, it follows that $Z_j(P_i)$ is a projective module for all j. So for each j, $Z_j(\overline{F})$ has Gorenstein projective dimension less than or equal to n. In particular, $G.p.d. M \le n$.
\end{proof}

We can prove now the existence of special Gorenstein projective precovers over a right coherent and left perfect ring $R$.\\
We recall that (by \cite{christensen:06:gorenstein}, Proposition 3.7), if $R$ is right coherent and any flat $R$-module has finite projective dimension, then any Gorenstein projective module is also Gorenstein flat. Then by \cite{liu:11: Gorenstein.complexes} and by \cite{liu:13: Gorenstein.complexes}, over a right coherent ring that is also left n-perfect, every Gorenstein projective complex is also a Gorenstein flat complex.\\
We use the notation $GorProj$ for the class of Gorenstein projective complexes and we denote by $GorFlat$ the class of Gorenstein flat complexes.

\begin{proposition}
Let $R$ be a right coherent ring. If $R$ is left $n$-perfect then the class of Gorenstein projective complexes is special precovering.
\end{proposition}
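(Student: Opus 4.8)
The plan is to reduce the Gorenstein projective precover problem to the Gorenstein flat one, already solved in Proposition 4, and then to correct a Gorenstein flat precover into a Gorenstein projective one using the finite Gorenstein projective dimension supplied by Proposition 5. Concretely, fix a complex $M$ in $Ch(R)$. Since the cotorsion pair $(Ch(\mathcal{GF}), Ch(\mathcal{GF})^\perp)$ is complete and, by \cite{liu:13: Gorenstein.complexes} Prop. 3.2, $Ch(\mathcal{GF}) = GorFlat$, I would first take a special Gorenstein flat precover $0 \rightarrow L \rightarrow H \rightarrow M \rightarrow 0$ with $H$ in $GorFlat$ and $L$ in $GorFlat^\perp$, the perpendicular being taken with respect to $Ext^1_{Ch(R)}$.

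The next step is to upgrade $H$ to something Gorenstein projective. For this I would first lift Proposition 5 to complexes: every Gorenstein flat complex $H$ has Gorenstein projective dimension at most $n$. This should follow by rerunning the argument of Proposition 5 inside $Ch(R)$, namely taking a partial projective resolution of the defining $Inj \otimes -$-exact sequence of $H$ and invoking Lemma 3 and Corollary 1, together with the termwise characterization from the preliminaries that a complex is Gorenstein projective if and only if all of its terms are. Granting $Gpd(H) \le n$, a standard splicing argument, exactly as in the module case, produces a short exact sequence $0 \rightarrow K \rightarrow G \rightarrow H \rightarrow 0$ in which $G$ is a Gorenstein projective complex and $K$ has finite projective dimension, at most $n-1$; here one glues a projective resolution of $H$ to a complete resolution of its top Gorenstein projective syzygy. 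This sequence is a special Gorenstein projective precover of $H$, since finite projective dimension forces $Ext^1_{Ch(R)}(Q,K)=0$ for every Gorenstein projective complex $Q$, by dimension shifting from $Ext^{\ge 1}(Q, \text{projective complex})=0$.

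Finally I would splice the two sequences. Composing the epimorphisms $G \rightarrow H$ and $H \rightarrow M$ gives an epimorphism $G \rightarrow M$ whose kernel $L'$ fits into a short exact sequence $0 \rightarrow K \rightarrow L' \rightarrow L \rightarrow 0$. Applying $Hom_{Ch(R)}(Q,-)$ for an arbitrary Gorenstein projective complex $Q$ and using $Ext^1_{Ch(R)}(Q,K)=0$ (finite projective dimension) together with $Ext^1_{Ch(R)}(Q,L)=0$ (since $L$ is in $GorFlat^\perp$ and $Q$ is in $GorFlat$, the latter by the inclusion $GorProj \subseteq GorFlat$ recalled before the statement) yields $Ext^1_{Ch(R)}(Q,L')=0$, that is, $L'$ is in $GorProj^\perp$. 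Hence $0 \rightarrow L' \rightarrow G \rightarrow M \rightarrow 0$ is a special Gorenstein projective precover of $M$, as required. The main obstacle I anticipate is the second step: making the passage from finite Gorenstein projective dimension to an actual special precover fully rigorous at the level of complexes, which requires both the complex analogue of Proposition 5 and the careful splicing construction. The final combination, by contrast, is purely formal once the two $Ext^1$-vanishings are in place.
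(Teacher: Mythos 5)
Your proposal is correct and follows essentially the same route as the paper: reduce to a special Gorenstein flat precover (Proposition 4), use Proposition 5 termwise to show a Gorenstein flat complex has Gorenstein projective dimension at most $n$ and hence admits a special Gorenstein projective precover with kernel of finite projective dimension, then combine the two short exact sequences (your composite-of-epimorphisms is exactly the paper's pullback) and conclude via the two $Ext^1$ vanishings. The "careful splicing" you flag as the main obstacle is precisely the commutative-diagram construction the paper carries out with the complexes $T_i$ and the resulting sequence ending in $P_0 \oplus T \rightarrow G$.
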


\begin{proof}
- We show first that every Gorenstein flat complex $G$ has a special Gorenstein projective precover.\\

Let $$0 \rightarrow \overline{G} \rightarrow P_{n-1} \rightarrow \ldots \rightarrow P_0 \rightarrow G \rightarrow 0$$

be a partial projective resolution of $G$. Then for each $j$ we have an exact sequence of modules
$$0 \rightarrow \overline{G}_j \rightarrow P_{n-1,j} \rightarrow \ldots \rightarrow P_{0,j} \rightarrow G_j \rightarrow 0$$

Since $Gpd$ $G_j \le n$ (by Proposition 5) it follows that each $\overline{G}_j$ is Gorenstein projective. Thus $\overline{G}$ is a Gorenstein projective complex (by \cite{liu:11: Gorenstein.complexes}, Theorem 2.2). So there exists an exact and $Hom(-,Proj)$ exact complex of projective complexes $$0 \rightarrow \overline{G} \rightarrow T_{n-1} \rightarrow \ldots \rightarrow T_0 \rightarrow \ldots$$\\
Let $T=Ker(T_{-1} \rightarrow T_{-2})$. Then $T$ is a Gorenstein projective complex, and we have a commutative diagram:\\

\[
\begin{diagram}
\node{0}\arrow{e}\node{\overline{G}}\arrow{s,=}\arrow{e}\node{T_{n-1}}\arrow{s}\arrow{e}\node{\cdots}\arrow{e}\node{T_1}\arrow{s}\arrow{e}\node{T_0}\arrow{s}\arrow{e}\node{T}\arrow{s}\arrow{e}\node{0}\\
\node{0}\arrow{e}\node{\overline{G}}\arrow{e}\node{P_{n-1}}\arrow{e}\node{\cdots}\arrow{e}\node{P_1}\arrow{e}\node{P_0}\arrow{e}\node{G}\arrow{e}\node{0}
\end{diagram}
\]

Therefore we have an exact sequence:\\
$$0 \rightarrow T_{n-1} \rightarrow P_{n-1} \oplus T_{n-2} \rightarrow \ldots \rightarrow P_1 \oplus T_0 \rightarrow P_0 \oplus T \xrightarrow{\delta} G \rightarrow 0$$

Let $V=Ker{\delta}$. Then $V$ has finite projective dimension, so $Ext^1(W,V)=0$
for any Gorenstein projective complex $W$.\\
We have an exact sequence $0 \rightarrow V \rightarrow P_0 \oplus T \rightarrow G \rightarrow 0$ with $P_0 \oplus T$ Gorenstein projective and with $V$ of finite projective dimension. Thus $P_0 \oplus T \rightarrow G$ is a special Gorenstein projective precover.\\

- We prove now that every complex $X$ has a special Gorenstein projective precover.\\

Let $X$ be any complex of $R$-modules. 
By Proposition 4, every complex over a right coherent ring has a Gorenstein flat cover. So there exists an exact sequence $$0 \rightarrow Y \rightarrow G \rightarrow X \rightarrow 0$$ with $G$ Gorenstein flat and with $Ext^1(U,Y)=0$ for any Gorenstein flat complex $U$.\\

By the above, there is an exact sequence $$0 \rightarrow L \rightarrow P \rightarrow G \rightarrow 0$$ with $P$ Gorenstein projective and with $L$ complex of finite projective dimension.\\
Form the pullback diagram

\[
\begin{diagram}
\node{}\node{L}\arrow{s}\arrow{e,=}\node{L}\arrow{s}\\
\node{0}\arrow{e}\node{M}\arrow{s}\arrow{e}\node{P}\arrow{s}\arrow{e}\node{X}\arrow{s,=}\arrow{e}\node{0}\\
\node{0}\arrow{e}\node{Y}\arrow{e}\node{G}\arrow{e}\node{X}\arrow{e}\node{0}
\end{diagram}
\]

Since $L \in {GorProj}^{\perp}$, $Y \in GorFlat^{\perp}$, $GorProj \subseteq GorFlat$, and the sequence $0 \rightarrow L \rightarrow M \rightarrow Y \rightarrow 0$ is exact, it follows that $M \in {GorProj}^{\perp}$.\\
So $0 \rightarrow M \rightarrow P \rightarrow X \rightarrow 0$ is exact with $P$ Gorenstein projective and with $M \in {GorProj}^{\perp}$

\end{proof}

\begin{corollary}
If $R$ is a right coherent ring that is left n-perfect then every module has a special Gorenstein projective precover.
\end{corollary}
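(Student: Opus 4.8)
The plan is to run the argument of Proposition 6 at the level of modules rather than complexes. This is legitimate because every ingredient used there is already available for modules, namely Lemmas 1--3, Corollary 1, and above all Proposition 5, which bounds the Gorenstein projective dimension of a Gorenstein flat module by $n$. So I would first handle Gorenstein flat modules and then reduce the general case to them by the same pullback device.

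First I would show that every Gorenstein flat module $G$ has a special Gorenstein projective precover. By Proposition 5 we have $Gpd\,G \le n$, so a partial projective resolution $0 \to \overline{G} \to P_{n-1} \to \cdots \to P_0 \to G \to 0$ has Gorenstein projective $n$-th syzygy $\overline{G}$. Completing $\overline{G}$ to a complete projective resolution $0 \to \overline{G} \to T_{n-1} \to \cdots \to T_0 \to \cdots$ and letting $T$ be the relevant syzygy, which is again Gorenstein projective, I would build the same comparison diagram as in Proposition 6 and read off the exact sequence $0 \to T_{n-1} \to P_{n-1}\oplus T_{n-2} \to \cdots \to P_0 \oplus T \xrightarrow{\delta} G \to 0$. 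Writing $V = \Ker\delta$, the module $V$ has finite projective dimension, hence lies in $\mathcal{GP}^\perp$, and $0 \to V \to P_0 \oplus T \to G \to 0$ exhibits $P_0\oplus T \to G$ as a special Gorenstein projective precover. This is exactly the first bullet of the proof of Proposition 6 with ``complex'' replaced by ``module.''

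Next, for an arbitrary module $X$, I would use that over the right coherent ring $R$ the class $\mathcal{GF}$ of Gorenstein flat modules is a Kaplansky class (the module analogue of Proposition 1) that is closed under direct limits (Proposition 2); hence the cotorsion pair $(\mathcal{GF},\mathcal{GF}^\perp)$ is cogenerated by a set and therefore complete, so $\mathcal{GF}$ is special precovering for modules. This yields an exact sequence $0 \to Y \to G \to X \to 0$ with $G$ Gorenstein flat and $Y \in \mathcal{GF}^\perp$. Applying the previous step to $G$ gives $0 \to L \to P \to G \to 0$ with $P$ Gorenstein projective and $L$ of finite projective dimension, so $L \in \mathcal{GP}^\perp$. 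Forming the same pullback diagram as in the proof of Proposition 6 then produces an exact sequence $0 \to M' \to P \to X \to 0$ together with $0 \to L \to M' \to Y \to 0$.

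The one point requiring care --- and the only place the hypotheses really interact --- is the verification that $M' \in \mathcal{GP}^\perp$, i.e. that $P \to X$ is genuinely special. Here I would invoke $\mathcal{GP} \subseteq \mathcal{GF}$ (valid since $R$ is right coherent and flat modules have finite projective dimension, by \cite{christensen:06:gorenstein}), which gives $\mathcal{GF}^\perp \subseteq \mathcal{GP}^\perp$ and hence $Y \in \mathcal{GP}^\perp$. Since $\mathcal{GP}^\perp$ is closed under extensions and both ends $L$ and $Y$ of the sequence $0 \to L \to M' \to Y \to 0$ lie in it, so does $M'$. Thus $0 \to M' \to P \to X \to 0$ is the desired special Gorenstein projective precover. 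I expect the main obstacle to be purely bookkeeping: confirming that the completion-and-diagram construction of the first step goes through verbatim for modules and that the Gorenstein flat cotorsion pair for modules is complete over a right coherent ring; no difficulty beyond Proposition 6 should arise.
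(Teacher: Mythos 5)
Your proof is correct, but it takes a genuinely different route from the paper. The paper deduces the corollary directly from Proposition 6: it applies the complex-level result to the stalk complex $\underline{M}$ concentrated in degree zero, reads off the degree-zero component $0 \rightarrow L_0 \rightarrow P_0 \rightarrow M \rightarrow 0$, and then verifies $L_0 \in \mathcal{GP}^\bot$ by testing against the disc complex $\overline{G} = \cdots \rightarrow 0 \rightarrow G \xrightarrow{id} G \rightarrow 0 \rightarrow \cdots$ on a Gorenstein projective module $G$ (which is a Gorenstein projective complex) and invoking the isomorphism $Ext^1(G, L_0) \cong Ext^1(\overline{G}, L)$ from \cite{enochs:11:relative2}, Proposition 2.1.3. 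You instead replay the entire proof of Proposition 6 inside $R\text{-}Mod$: Proposition 5 is already a module statement, the syzygy/comparison-diagram construction and the finiteness of $p.d.\,V$ go through verbatim, and the completeness of the module-level cotorsion pair $(\mathcal{GF}, \mathcal{GF}^\bot)$ that you need for the second step is exactly what the paper itself asserts (and uses) in the proof of Proposition 3, so nothing extra is being smuggled in. The trade-off: the paper's argument is shorter once Proposition 6 is in hand, but it hinges on the slightly delicate translation from orthogonality in $Ch(R)$ to orthogonality in $R\text{-}Mod$; your argument avoids that translation entirely, is self-contained at the module level, and makes explicit the finer structure of the kernel (an extension of a module of finite projective dimension by a module in $\mathcal{GF}^\bot$), at the cost of duplicating the work of Proposition 6. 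Both proofs are valid under the stated hypotheses.
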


\begin{proof}
Consider a left $R$-module $M$ and let $\underline{M}$ denote the complex with $M$ in the zeroth place and zeros everywhere else. By Proposition 6 there exists an exact sequence $0 \rightarrow L \rightarrow P \rightarrow M \rightarrow 0$  with $P$ a Gorenstein projective complex, and with $L \in GorProj^\bot$. In particular there is an exact sequence of modules $0 \rightarrow L_0 \rightarrow P_0 \rightarrow M \rightarrow 0$ with $P_0$ a Gorenstein projective module (by [13]).\\
We show that $L_0 \in \mathcal{GP}^\bot$.
Let $G$ be any Gorenstein projective left $R$-module and let $\overline{G} = \ldots \rightarrow 0 \rightarrow G \rightarrow G \rightarrow 0 \rightarrow \ldots $ be the complex where the two G's are in the first and zeroth place and with the map $G \rightarrow G$ the identity map. By [13] $\overline{G}$ is a Gorenstein projective complex; by Proposition 6, we have that $Ext^1(\overline{G}, L)=0$. By \cite{enochs:11:relative2}, Proposition 2.1.3, $Ext^1(G, L_0)=0$.\\
Thus $P_0 \rightarrow M$ is a special Gorenstein projective precover.
\end{proof}

\begin{proposition}
Let $R$ be a right coherent ring. If $R$ is $n$-left perfect then $(\mathcal{GP}, \mathcal{GP}^\bot)$ is a complete hereditary cotorsion pair.
\end{proposition}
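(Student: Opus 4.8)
The plan is to verify, in turn, the three defining features of a complete hereditary cotorsion pair: that $(\mathcal{GP}, \mathcal{GP}^\bot)$ is a cotorsion pair, that it is complete, and that it is hereditary. All three rest on Corollary 2, together with the standard fact that $\mathcal{GP}$ is closed under direct summands and the defining short exact sequences of Gorenstein projective modules. First I would check the one nontrivial identity, $\mathcal{GP} = {}^\bot(\mathcal{GP}^\bot)$ (the equality $\mathcal{GP}^\bot = \mathcal{GP}^\bot$ on the right being tautological). The inclusion $\mathcal{GP} \subseteq {}^\bot(\mathcal{GP}^\bot)$ is immediate. For the reverse, take $M \in {}^\bot(\mathcal{GP}^\bot)$. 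By Corollary 2 there is a special Gorenstein projective precover $0 \rightarrow L \rightarrow P \rightarrow M \rightarrow 0$ with $P \in \mathcal{GP}$ and $L \in \mathcal{GP}^\bot$. Since $M \in {}^\bot(\mathcal{GP}^\bot)$ and $L \in \mathcal{GP}^\bot$, we get $Ext^1(M, L) = 0$, so this sequence splits and $M$ is a direct summand of $P$. As $\mathcal{GP}$ is closed under direct summands, $M \in \mathcal{GP}$.

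For completeness I would invoke Salce's lemma. Corollary 2 already provides a special $\mathcal{GP}$-precover of every left $R$-module, and the module category has enough projectives and injectives. Hence Salce's lemma turns the existence of special precovers into the existence of special $\mathcal{GP}^\bot$-preenvelopes for the same pair, and the cotorsion pair is complete.

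For the hereditary property I would show that $Ext^i(G, B) = 0$ for every $i \ge 1$, every $G \in \mathcal{GP}$ and every $B \in \mathcal{GP}^\bot$, by induction on $i$. The case $i = 1$ is precisely the definition of $\mathcal{GP}^\bot$. For the inductive step, recall that any Gorenstein projective $G$ sits in a short exact sequence $0 \rightarrow G' \rightarrow P \rightarrow G \rightarrow 0$ with $P$ projective and $G'$ again Gorenstein projective, obtained by reading off a syzygy from a complete projective resolution of $G$. Applying $Hom(-, B)$ and using $Ext^j(P, B) = 0$ for all $j \ge 1$ gives $Ext^{i+1}(G, B) \cong Ext^i(G', B)$, and the right-hand side vanishes by the induction hypothesis since $G' \in \mathcal{GP}$. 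This reduction of degree from $i+1$ to $i$ closes the induction.

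The step I expect to require the most care is the cotorsion pair identity $\mathcal{GP} = {}^\bot(\mathcal{GP}^\bot)$, because it relies on $\mathcal{GP}$ being closed under direct summands. This holds over an arbitrary ring, but it is the one ingredient not produced by the machinery of this section and so must be invoked explicitly; everything else is a formal consequence of Corollary 2, Salce's lemma, and dimension shifting.
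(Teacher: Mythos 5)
Your proof is correct and, for the cotorsion-pair identity and completeness, follows essentially the same route as the paper: apply the special Gorenstein projective precover of Corollary 2 to an arbitrary $M \in {}^\bot(\mathcal{GP}^\bot)$, split the sequence, and conclude $M \in \mathcal{GP}$ by closure under direct summands (a hypothesis the paper leaves implicit but which you rightly flag as the one external ingredient); completeness then follows from Corollary 2, with your explicit appeal to Salce's lemma for the preenvelope half being slightly more careful than the paper's one-line citation. The only genuine divergence is in the hereditary step: the paper invokes the fact that $\mathcal{GP}$ is closed under kernels of epimorphisms (Holm's projectively resolving property) together with the standard equivalence for complete cotorsion pairs, whereas you prove $Ext^i(G,B)=0$ for $i \ge 1$ directly by dimension shifting along the syzygy sequence $0 \rightarrow G' \rightarrow P \rightarrow G \rightarrow 0$ extracted from a complete projective resolution. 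Your version is marginally more self-contained, since it needs only the definition of Gorenstein projectivity rather than the resolving theorem; both arguments are standard and valid.
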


\begin{proof}
Let $X \in ^\bot(\mathcal{GP}^\bot)$. By Proposition 6, there exists an exact sequence $ 0 \rightarrow M \rightarrow P \rightarrow X \rightarrow 0$ with $P$ Gorenstein projective and with $M \in \mathcal{GP}^\bot$. But then $Ext^1(X,M) = 0$, so $P \simeq M \oplus X$, and therefore $X$ is Gorenstein projective. So $(\mathcal{GP}, \mathcal{GP}^\bot)$ is a cotorsion pair.\\
By Corollary 2, the pair $(\mathcal{GP}, \mathcal{GP}^\bot)$ is complete.\\
The pair $(\mathcal{GP}, \mathcal{GP}^\bot)$ is hereditary because the class of Gorenstein projective modules is closed under kernels of epimorphisms.
\end{proof}

We recall that if $R$ is a left noetherian ring such that $i.d._R R \le n$ then by \cite{enochs:00:relative}, Proposition 9.1.2, $R$ is left $n$-perfect.

By the above we obtain:\\

\begin{proposition}
Let $R$ be  a right coherent and left noetherian ring such that $i.d._RR \le n$. Then:\\
1) the class of Gorenstein projective modules is special precovering in $R-Mod$;\\
2) $(\mathcal{GP}, \mathcal{GP}^\bot)$ is a complete hereditary cotorsion pair in $R-Mod$.\\
3) the class of Gorenstein projective complexes is special precovering in $Ch(R)$.
\end{proposition}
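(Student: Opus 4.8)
The plan is to reduce all three assertions to the left $n$-perfect case, since each of them has already been proved for rings that are simultaneously right coherent and left $n$-perfect. The only genuinely new ingredient is a translation of hypotheses. First I would invoke the remark immediately preceding the statement: because $R$ is left noetherian with $i.d._RR \le n$, Proposition 9.1.2 of \cite{enochs:00:relative} ensures that $R$ is left $n$-perfect. Together with the standing assumption that $R$ is right coherent, this places $R$ exactly within the hypotheses of Corollary 2, of Proposition 7, and of Proposition 6.

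Having made this reduction, each of the three conclusions becomes a direct citation of an earlier result. For part (1), Corollary 2 states precisely that over a right coherent, left $n$-perfect ring every left $R$-module admits a special Gorenstein projective precover, which is the assertion that $\mathcal{GP}$ is special precovering. For part (2), Proposition 7 gives, under the same hypotheses, that $(\mathcal{GP}, \mathcal{GP}^\bot)$ is a complete hereditary cotorsion pair. For part (3), Proposition 6 furnishes that the class $GorProj$ of Gorenstein projective complexes is special precovering in $Ch(R)$. Thus all three parts follow the moment $R$ is known to be left $n$-perfect.

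I do not expect any serious obstacle here, since the substantive homological work has already been carried out in the lemmas and propositions above; the proposition is essentially a packaging of those results under a more classical and recognizable hypothesis on $R$. The single point requiring care is verifying that left noetherian together with finite self-injective dimension does fall under the notion of left $n$-perfect, and this is exactly what the cited Proposition 9.1.2 provides. I would therefore present the proof as one short paragraph: observe that $R$ is left $n$-perfect, and then apply Corollary 2, Proposition 7, and Proposition 6 to obtain (1), (2), and (3) respectively.
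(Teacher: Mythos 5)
Your proposal is correct and follows exactly the paper's route: the paper likewise notes (via Proposition 9.1.2 of Enochs--Jenda) that a left noetherian ring with $i.d._RR \le n$ is left $n$-perfect, and then obtains the three assertions directly from Corollary 2, Proposition 7, and Proposition 6. No gaps.
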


Sergio Estrada\\
University of Murcia,
Murcia, Spain\\
Email: sestrada@um.es

Alina Iacob\\
Georgia Southern University,
Statesboro, GA 30460-8093\\
Email:  aiacob@GeorgiaSouthern.edu

Sinem Odabasi\\
University of Murcia,
Murcia, Spain\\
Email: sinem.odabasi1@um.es

\end{document}